\documentclass[12pt]{amsart}

\usepackage{
amsfonts,
latexsym,
amssymb,
enumerate,
verbatim,
mathrsfs,
}

\newcommand{\labbel}{\label}

\newtheorem{theorem}{Theorem}[section]

\newtheorem{proposition}[theorem]{Proposition} 
 
\newtheorem{corollary}[theorem]{Corollary}

\newtheorem*{theorem*}{Theorem}
\newtheorem*{corollary*}{Corollary}

\theoremstyle{definition}

\theoremstyle{remark}
\newtheorem{remark}[theorem]{Remark}


 \allowbreak

 \allowdisplaybreaks[1]

\begin{document}
 
\title[Relation identities, congruence modularity]
{Relation identities equivalent\\to congruence modularity}

\author{Paolo Lipparini} 
\address{ 
Dipartimento Relativo di Matematica\\Viale della  Ricerca
 Scientifica\\Universit\`a di Roma ``Tor Vergata'' 
\\I-00133 ROME ITALY}
\urladdr{http://www.mat.uniroma2.it/\textasciitilde lipparin}

\keywords{Congruence modular variety; identity; 
reflexive and admissible relation;
 (directed) Gumm terms}

\subjclass[2010]{08B10}
\thanks{Work performed under the auspices of G.N.S.A.G.A. Work 
partially supported by PRIN 2012 ``Logica, Modelli e Insiemi''}

\begin{abstract}
We present some identities 
dealing with reflexive and admissible relations and which, through
a variety, are equivalent to congruence modularity.
\end{abstract}

\maketitle

\section{Introduction} \labbel{int} 

Congruences and congruence identities
have proved to be fundamental notions in universal algebra.
See, e.~g., J{\'o}nsson \cite{cd} for an introduction.
It has been observed that sometimes 
reflexive and admissible relations play an important role
even when the main concern  are congruences.
See, e.~g.,  \cite[p. 370]{cd} and
Tschantz \cite{T}, just to mention some. 
Tolerances, too, have sometimes proved useful,
see, for example, 
Chajda \cite{cha}, 
 Cz\'edli, Horv\'ath, and  Lipparini \cite{CHL},
Kearnes and Kiss \cite{kk}, 
Lipparini \cite{contol}
and further references there. 
Many identities equivalent to 
congruence modularity are known, 
e.~g., the quoted \cite{CHL,cd,T},
Freese and J{\'o}nsson  \cite{FJ}, 
Gumm \cite{G1,G2} and further references in the quoted papers.
We shall describe here some identities 
which are equivalent to congruence modularity
but are expressed also in terms of reflexive and admissible relations.
A sample  of the identities we have found 
is given in the following theorem, but 
first we need to introduce some notations.

Juxtaposition denotes intersection,
$ \circ $ denotes composition of binary relations
and, for $R$ a relation,  $R ^\smallsmile $ denotes the \emph{converse}
of $R$, that is, $b \mathrel {R ^\smallsmile }  a $
holds if and only if $a \mathrel R b  $.
By $R^*$ we denote the transitive  
closure of $R$ and 
 $ \overline{R} $ denotes the 
smallest reflexive and admissible relations 
containing  $R$
(of course, this is dependent on the algebra we are working in).
Recall that a \emph{tolerance} $\Theta$ 
is a reflexive, symmetric and admissible relation.
For simplicity, at first reading, the reader might always 
take all tolerances here to be congruences.

We say that a relation identity $\varepsilon$ 
\emph{holds} in some variety $\mathcal V$ if,
for every algebra $\mathbf A \in \mathcal V$,
the identity $\varepsilon$ holds for all reflexive and admissible relations 
of $\mathbf A$. Some variables in $\varepsilon$ might
be required to vary among 
tolerances or congruences;
formally, this makes no difference, since if
$R$ is a variable for reflexive and admissible relations,
then, say, $ (R \circ R ^\smallsmile )^* $ can be considered as (or 
substituted for) a variable for congruences.

Notice that an inclusions such as 
$\iota \subseteq \iota'$ 
can be considered as an identity, since it is equivalent to
$\iota = \iota \iota'$.

\begin{theorem} \labbel{d}
For every variety, each of the following 
identities  is equivalent to congruence modularity.
\begin{align*}
\labbel{11a}  \tag{1.1}
\Theta (S \circ   S)
&\subseteq 
( \Theta  S) ^*   
\\
 \labbel{11b}\tag{1.2}
\Theta S^*
&\subseteq 
( \Theta  S) ^*
\\
 \labbel{11c}\tag{1.3}
\Theta (S \circ S ^\smallsmile )
&\subseteq 
(\Theta  S \circ  \Theta S ^\smallsmile)^*
\\
 \labbel{11d}\tag{1.4}
\Theta (S \circ  T )^*
& \subseteq 
\Theta (\overline{S \cup T}) \circ 
 (\Theta  S \circ   \Theta  T )^*   
\\
 \labbel{11e}\tag{1.5}
\Theta (S \circ  T )
& \subseteq 
\Theta (\overline{S ^\smallsmile  \cup T}) \circ 
 (\Theta  S \circ   \Theta  T )^*   
 \end{align*}    
where  $S$ and $T$ vary among 
reflexive and admissible relations and     
$\Theta$ can be equivalently taken to vary  among tolerances
or  congruences.
 \end{theorem}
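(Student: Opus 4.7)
The plan is to treat the five identities together, closing a cycle of implications rather than proving each equivalence in isolation. First I would record the ``free'' implications between them that hold for reflexive admissible relations in any algebra, without any Mal'cev hypothesis: identities (1.1) and (1.2) are equivalent by induction on the length of a chain in $S^*$ (using, for the nontrivial direction, that $S \cup (S \circ S)$ is again reflexive admissible, so that (1.1) applied to this larger relation reassembles into $(\Theta S)^*$), and similar specialisation arguments pass between (1.4) and (1.5) and from (1.4) to (1.1)--(1.3). This reduces the theorem to two substantive directions: congruence modularity implies the strongest among (1.1)--(1.5), and the weakest of them implies congruence modularity.

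For the direction from congruence modularity to the identities, I would invoke the characterisation of modular varieties by (directed) Gumm terms $d_1,\dots,d_n,p$ alluded to in the keywords. To establish (1.4), suppose $a \mathrel{\Theta} c$ together with a chain $a = x_0 \mathrel{S} y_0 \mathrel{T} x_1 \mathrel{S} \dots \mathrel{T} x_m = c$. The plan is to substitute the triple $(a,\cdot,c)$ into each $d_i$ with the middle argument ranging over the intermediate points, and to read off a zig-zag related successively by $\Theta S$ and $\Theta T$ from the defining identities of the $d_i$ together with admissibility of $S$, $T$, and $\Theta$; an initial correction arising from $d_1(x,y,y) \approx x$ applied modulo $\overline{S \cup T}$ rather than modulo equality accounts for the extra factor $\Theta \overline{S \cup T}$ on the right. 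Analogous, and in most cases simpler, calculations dispatch (1.1)--(1.3) and (1.5); the directed form of Gumm terms is what makes it possible to keep $S$ and $T$ separated, with no alternation, in (1.4) and (1.5).

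For the reverse direction it suffices to derive congruence modularity from a single identity. Working in the free algebra $F = F_{\mathcal V}(x,y,z)$, I would let $S$ be the reflexive admissible relation generated by $(x,y)$ and $(y,z)$ and $\Theta$ the principal congruence generated by $(x,z)$, so that the pair $(x,z)$ satisfies the hypothesis of (1.1). The chain in $(\Theta S)^*$ that the identity then produces is realised in $F$ by a sequence of ternary terms $t_i(x,y,z)$; reading off the relations $t_i \mathrel{\Theta} t_{i+1}$ and $t_i \mathrel{S} t_{i+1}$ and translating them through the definitions of $S$ and $\Theta$ in $F$ yields a system of equations in $x$, $y$, $z$ that matches Gumm's Mal'cev condition for modularity.

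The main obstacle I expect is precisely this last extraction step: the generic chain supplied by the identity must be massaged into terms that satisfy the parity identities $d_i(x,y,y) \approx d_{i+1}(x,y,y)$ and $d_i(x,x,y) \approx d_{i+1}(x,x,y)$ at the right indices, which requires choosing the generators, the relations $S$ and $\Theta$, and the way the chain is split with some care. The identities (1.4) and (1.5) add a further layer of bookkeeping, since one must control the extra factor $\Theta \overline{S \cup T}$ or $\Theta \overline{S^\smallsmile \cup T}$ and verify that it does not relax the identity enough to escape modularity.
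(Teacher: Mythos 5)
Your forward direction (modularity $\Rightarrow$ the identities) is essentially the paper's route — directed Gumm terms plus a substitution computation — though the real work there is nontrivial: the paper needs the nested term $j^*(x,y,z)=j_1(x,y,j_1(x,y,z))$ to make the links land in the \emph{tolerance} generated by $R$ (not just a congruence), and a separate computation with $p$ to produce the single correction factor $\Theta(\overline{S\cup T})$, resp.\ $\Theta(\overline{S^\smallsmile\cup T})$. The genuine gap is in your reverse direction. You reduce it to extracting Gumm's Mal'cev condition from the chain that (1.1) supplies in $F_{\mathcal V}(x,y,z)$ with $S$ the reflexive admissible relation generated by $(x,y),(y,z)$ and $\Theta=\cg(x,z)$, and you yourself flag this extraction as the main obstacle; it really is one. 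Translating that chain only gives ternary terms $x=t_0,\dots,t_n=z$ with $t_i(x,y,x)\approx x$, together with, for each $i$, a term $J_i$ \emph{with parameters} satisfying $t_i(x,y,z)\approx J_i(x,y,x,y,z)$ and $t_{i+1}(x,y,z)\approx J_i(y,z,x,y,z)$. Nothing in this data produces the alternation identities of Gumm or Day terms (nothing relates $t_i(x,x,z)$ to $t_{i+1}(x,x,z)$, or $t_i(x,z,z)$ to $t_{i+1}(x,z,z)$), and because of the parameters it is not even clear that the extracted condition implies modularity at all; so the implication ``(1.1) $\Rightarrow$ congruence modularity'' is not established. (A smaller soft spot: your ``free'' implication from (1.4) to (1.3) is not immediate, since specializing $T:=S^\smallsmile$ leaves the factor $\Theta(\overline{S\cup S^\smallsmile})$, which is contained in the left-hand side of (1.3), not obviously in its right-hand side.)

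The paper avoids the extraction problem entirely by a specialization you did not consider: for congruences $\alpha,\beta,\gamma$ take $\Theta:=\alpha$, $S:=\beta\circ\alpha\gamma$, $T:=\beta$ (note that $S$ is reflexive and admissible, though not symmetric — this is exactly where working with relations rather than congruences pays off). Since $\alpha$ is transitive, $\alpha\,(\beta\circ\alpha\gamma)\subseteq\alpha\beta\circ\alpha\gamma$, so in each of (1.1)–(1.5) the right-hand side collapses into $\alpha\beta+\alpha\gamma$, while the left-hand side contains $\alpha\,(\beta\circ\alpha\gamma\circ\beta)$. Thus each identity yields $\alpha\,(\beta\circ\alpha\gamma\circ\beta)\subseteq\alpha\beta+\alpha\gamma$, which for a variety implies congruence modularity by Day's theorem — no free algebras, no term extraction, and the same two-line argument handles all five identities (including the extra factors in (1.4) and (1.5), since $\overline{S\cup T}\subseteq\beta\circ\alpha\gamma$ and $\overline{S^\smallsmile\cup T}\subseteq\alpha\gamma\circ\beta$ under this substitution). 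If you want to keep your free-algebra strategy, you would have to prove that your parameterized, alternation-free condition implies modularity, which is precisely the missing — and doubtful — step.
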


The result is quite curious since, by minimal 
variations on the above identities,
we get identities which are not equivalent to congruence modularity.
For example, if we ``merge'' \eqref{11c} and  \eqref{11d}
as $\Theta (S \circ T ^\smallsmile )
\subseteq 
(\Theta  S \circ  \Theta T ^\smallsmile)^*$, 
we get an identity equivalent to congruence distributivity,
hence strictly stronger than modularity.
As another example, 
the variation 
$\Theta (S \circ S)
\subseteq 
( \Theta S ^\smallsmile)^*$ of \eqref{11a}, too, 
is strictly stronger than modularity, since it implies $m$-permutability
for some $m$.

The identities in Theorem \ref{d} are special cases of
the identities  (A1),  (B1), (C1) and (D1)
in Corollary \ref{tut}, which shall be proved below.

\section{A strong  identity for relations} \labbel{anid}

H.-P. Gumm \cite{G1,G2} 
 provided a  characterization of congruence modular
varieties
by means of the existence of certain terms;
we shall not need the explicit description of Gumm terms in what follows.
 Kazda,  Kozik,  McKenzie and Moore
\cite{kkmm} 
showed that a variety has Gumm terms if and only if 
it has \emph{directed Gumm terms},
that is, terms $p, j_1, \dots, j_k$ satisfying the following set of identities,
for some $k$.

\begin{align}
\labbel{1} 
\tag{DG1}
x & = p(x,z,z)
\\
\labbel{2} 
\tag{DG2}
 p(x,x,z) &= j_1(x,x,z) 
  \\
\labbel{3} 
\tag{DG3}
  x &=j_i(x,y, x), 
&& \text{ for } 
1 \leq i \leq k,
 \\ 
\labbel{4} 
\tag{DG4}
  j_{i}(x,z,z) &=
j_{i+1}(x,x,z)  && \text{ for \ } 
1 \leq i < k
 \\ 
\labbel{5} 
\tag{DG5}
j_{k}(x,y,z) &=z  
\end{align}   

In particular, by the mentioned results,
a variety
is congruence modular if and only if 
it has  directed Gumm terms, for some $k$.
Notice that we have given the definition 
of directed J{\'o}nsson terms in the reversed order,
in comparison with \cite{kkmm}.
However, the two definitions are obviously equivalent:
just simultaneously reverse both the order of variables and the order of terms.

Recall the notations introduced right
before Theorem \ref{d};
in particular, recall that juxtaposition denotes
intersection. 
Furthermore, we let $S \circ _m T$  denote
$ S \circ T \circ S \dots$
with $m$ factors, that is, 
with $m-1$ occurrences of $\circ$.
 Moreover,  $R^h $ is $  R \circ R \circ R \dots$
with $h$ factors, that is, 
$R^h = R \circ _h R$. 
 We let
$S+T$ denote  $\bigcup _{m \in \mathbb N} S \circ_m T$; in particular,
for $\alpha$ and $\beta$ congruences,
$ \alpha + \beta $ is the \emph{join} in the congruence lattice.
Notice that the set of all reflexive and admissible relations on some 
algebra also forms a lattice, but in this case the join of
$S$ and $T$ is 
$ \overline{S \cup T} $.
We shall frequently use the fact that
$ \overline{S \cup T}  \subseteq S \circ T$,
for reflexive and admissible relations $S$ and $T$.
Notice also that, in the above notations,
for a reflexive relation $R$, we have
$R^*= R + R$.
If $R$ is a reflexive and admissible relation, let 
$\Theta_R$ be the smallest tolerance containing $R$, 
that is, $\Theta = \overline{R \cup R ^\smallsmile } $.

\begin{theorem} \labbel{t}
If a  variety $\mathcal V$ 
has $k+1$ directed Gumm terms 
$p, j_1, \dots, \allowbreak  j_k$, with 
$k \geq 2$, then, for every 
natural number $\ell \geq 1$,   
 $\mathcal V$ satisfies the following identities
\begin{multline}   
\labbel{turt}
R (V \circ W)(S_1 \circ S_2 \circ \dots \circ S_ \ell)
\subseteq 
\\
R  ( \overline{V \cup W}) \circ 
( \Theta_R  S_1 \circ \Theta_R   S_2 \circ \dots \circ \Theta_R  S_ \ell)  ^{2k-3} 
\end{multline} 
\begin{multline}    
\labbel{turtt}
R (V \circ W)(S_1 \circ   S_2 \circ \dots \circ S_ \ell)
\subseteq 
\\
R R ^\smallsmile  ( \overline{V  ^\smallsmile \cup W}) \circ 
( \Theta_R  S_1 \circ \Theta_R   S_2  \circ \dots \circ \Theta_R  S_ \ell) ^{k-1} 
\end{multline}
where 
$R$,  $V$, $W$, $S_1$ \dots\ vary among
reflexive and admissible relations.
 \end{theorem}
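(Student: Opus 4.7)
The plan is, given $(a, b)$ in the LHS of (\ref{turt}) or (\ref{turtt}), to construct an explicit chain of intermediate elements realizing RHS membership via iterated applications of the directed Gumm terms $p, j_1, \ldots, j_k$. Fix witnesses $c$ with $a \mathrel{V} c \mathrel{W} b$ and $a = d_0, d_1, \ldots, d_\ell = b$ with $d_{j-1} \mathrel{S_j} d_j$, and recall that $a \mathrel{R} b$. The intended chain will begin with one step in $R \cap \overline{V \cup W}$ (resp.\ $R \cap R^\smallsmile \cap \overline{V^\smallsmile \cup W}$), followed by $(2k-3)\ell$ (resp.\ $(k-1)\ell$) alternating steps each in $\Theta_R \cap S_j$.

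The construction is a two-dimensional grid of elements indexed by a Gumm level $i \in \{0, 1, \ldots, k\}$ and an $S$-chain position $j \in \{0, 1, \ldots, \ell\}$. For each fixed $j$, the vertical chain is
\[
a = p(a, d_j, d_j) \to p(a, c, d_j) \to p(a, a, d_j) = j_1(a, a, d_j) \to j_1(a, c, d_j) \to \cdots \to j_k(a, c, d_j) = d_j,
\]
obtained using (DG1), (DG2), (DG4), (DG5); each vertical transition changes only the middle coordinate and so lies in $V$, $W$, or a converse, picked up from $(a, c) \in V$ and from $(c, d_j)$-translates of $(c, b) \in W$ under $S$-admissibility. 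Horizontally (fixing $i$, advancing $j \to j+1$) only the third coordinate changes by $(d_j, d_{j+1}) \in S_{j+1}$, giving an $S_{j+1}$-step by admissibility. Identity (DG3) $x = j_i(x, y, x)$ together with $(a, b) \in R$ places every $j_i$-type element (for $i \geq 1$) in the $\Theta_R$-block of $\{a, b\}$: substituting $(a, b) \in R$ into the first or the third argument certifies $R$-relatedness to $a$ or to $b$ respectively, and hence $\Theta_R$-relatedness. Consequently, each horizontal transition lies in $\Theta_R \cap S_{j+1}$.

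The witnessing chain traverses this grid in a zig-zag: the first leg from $a$ to an early level-$0$ Gumm element is assigned to the initial $R \cap \overline{V \cup W}$-step via $a = p(a, c, c)$ from (DG1) for $\overline{V \cup W}$-membership paired with a (DG3) substitution for $R$-membership; subsequent legs alternate horizontal (one $\Theta_R \cap S_j$-factor each) with vertical (consuming Gumm levels), so as to match the $(2k-3)\ell$ factors demanded by (\ref{turt}). For (\ref{turtt}), the symmetric first factor $R R^\smallsmile \cap \overline{V^\smallsmile \cup W}$ lets forward and backward Gumm transitions be folded together, halving the remaining exponent to $k-1$. The main obstacle will be verifying the first factor: in general $(a, b) \notin \overline{V \cup W}$ (since that relation need not be transitive), so the starting intermediate must be chosen so that $a$ is expressible by a Gumm evaluation reaching it in a single $V \cup W$-admissible step, while simultaneously admitting an $R$-parallel via substitution of $(a, b) \in R$ into an outer coordinate; balancing these two demands and counting the Gumm transitions to land exactly at $2k-3$ or $k-1$ is the delicate combinatorial core of the proof.
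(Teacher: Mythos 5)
Your plan gestures in the right general direction (a grid of Gumm-term images, one initial factor for $R(\overline{V\cup W})$, then $\Theta_R\cap S_j$ steps), but as written it has genuine gaps at exactly the points where the real work lies. First, your justification of the $\Theta_R$-membership of the horizontal steps is wrong: knowing that each grid element is $R$-related to $a$ or to $b$ via (DG3) does not make two \emph{consecutive} grid elements $\Theta_R$-related to each other, since a tolerance is not transitive. The paper's proof needs a genuine substitution trick: it introduces the nested term $j^*(x,y,z)=j_1(x,y,j_1(x,y,z))$, which satisfies $x=j^*(x,y,x)$, and writes both consecutive elements $j^*(a,a_h,c)$ and $j^*(a,a_{h+1},c)$ as values of one term in which the only differing entries form pairs $(a,c)$ and $(c,a)$, so that admissibility of $\Theta_R$ applies directly. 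Without some such device your claim ``each horizontal transition lies in $\Theta_R\cap S_{j+1}$'' is unsupported. Second, your vertical chains are broken: the passage from $j_i(a,c,d_j)$ to $j_{i+1}(a,a,d_j)$ is not an instance of (DG4), which requires the second and third arguments to be \emph{equal} ($j_i(x,z,z)=j_{i+1}(x,x,z)$); and even if the vertical transitions were legitimate, steps lying only in $V$, $W$ ``or a converse'' cannot be placed anywhere in the right-hand side of \eqref{turt}, which after the single factor $R(\overline{V\cup W})$ admits only $\Theta_R S_j$ factors. In the actual proof the middle coordinate never moves to the $V\circ W$-witness $b$ at all after the first step: it sweeps along the whole $S$-chain from $a$ to the endpoint, so each full row costs one block $\Lambda=\Theta_R S_1\circ\dots\circ\Theta_R S_\ell$, and (DG4) is applied only when the sweep is complete.

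Third, you explicitly defer the two issues that are the actual content of the theorem: producing the first factor and hitting the exponents $2k-3$ and $k-1$. The paper resolves the first-factor obstacle not by choosing a clever ``starting intermediate'' reached in one $V\cup W$-step from $a$, but by a nested evaluation: $a=p(a,p(a,a,b),p(a,a,b))\mathrel{\overline{V\cup W}}p(a,p(a,b,b),p(a,a,c))=p(a,a,p(a,a,c))$ and $a=p(a,a,p(a,a,a))\mathrel{R}p(a,a,p(a,a,c))$, so the first intermediate element is $p(a,a,p(a,a,c))=j_1(a,a,j_1(a,a,c))$ by (DG2). The exponent $2k-3$ then falls out of a two-phase count: $k-1$ blocks $\Lambda$ to unwind the nested occurrence (passing through $j_1(a,c,j_i(a,a,c))$ for $i=2,\dots,k$) and $k-2$ further blocks for the outer term, while for \eqref{turtt} the simpler start $a=p(a,b,b)\mathrel{\overline{V^\smallsmile\cup W}}p(a,a,c)$, together with $a\mathrel{R}p(a,a,c)$ and $a=p(a,c,c)\mathrel{R^\smallsmile}p(a,a,c)$, needs no nesting and gives $k-1$. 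Since your proposal leaves precisely these constructions and counts open (``the delicate combinatorial core''), and its stated mechanisms for the $\Theta_R$ and (DG4) steps do not work, it does not yet constitute a proof.
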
 

\begin{proof}
Suppose that $\mathbf A$ is an algebra belonging to $\mathcal V$
and that in $\mathbf A$ we have
$(a,c ) \in R (V \circ W)(S_1 \circ S_2 \circ \dots \circ S_ \ell)$,
for certain reflexive and admissible relations $R$, $V, \dots$  
Then
$ a \mathrel R  c$,
$ a \mathrel { V } b \mathrel {W } c $
 and 
$ a = a_0 \mathrel { S_1} a_1 \mathrel {S_2} a_2 \dots a _{ \ell-1} 
\mathrel { S_ \ell} a_ \ell = c  $,
for certain elements 
$ b, a_1, a_2, \dots$\  
In order to prove  \eqref{turt}, let us compute 
\begin{gather*}   
a= p(a,p(a \text{\boldmath$a$}b), p(aa\text{\boldmath$b$}))
\mathrel { \overline{V \cup W} } 
p(a,p(a
\text{\boldmath$b$}
b), p(aa\text{\boldmath$c$})) = p(a,a, p(aac)),
\\
a= 
p(a,a, a)=
p(a,a,p(aa \text{\boldmath$a$}))
\mathrel { R  } 
p(a,a, p(aa \text{\boldmath$c$})),
 \end{gather*}
where elements in bold are those moved by $V$, $W$ or $R$
and we have used \eqref{1}.
Moreover,
 $p(a, a, p(aac)) =
j_1(a, a, j_1(aac))$,
by \eqref{2}, 
 hence
\begin{equation}\labbel{a}  
a \mathrel {R(\overline{V \cup W}) }  j_1(a, a, j_1(aac))
  \end{equation}   
For $h=0, \dots, \ell-1$, 
we have 
\begin{equation*}    
\text{$j_1(a, \text{\boldmath$a_h$}, j_1(a\text{\boldmath$a_h$}c) )
\mathrel {S_h}
 j_1(a, \text{\boldmath$a_{h+1}$}, j_1(a\text{\boldmath$a_{h+1}$}c))$}
 \end{equation*} 
For sake of brevity,
let
$j^*(x,y,z) = j_1(x,y, j_1(xyz))$,
thus 
$j^*$  satisfies $x=j^*(x,y,x)$, by \eqref{3}.
Then
\begin{multline*}
j_1(a, a_h, j_1(a a_h c) )
=
j^*(a,a_h, c)
=
j^*( j^*(aa_{h+1}  \text{\boldmath$a$}),a_h, j^*( \text{\boldmath$c$}a_{h+1} c))
\mathrel \Theta_R  
\\
j^*( j^*(aa_{h+1} \text{\boldmath$c$}),a_h, j^*( \text{\boldmath$a$}a_{h+1} c))=
j^*(a a_{h+1} c) = j_1(a, a_{h+1}, j_1(aa_{h+1}c))
 \end{multline*}   

 Hence 
$j_1(a, a_h, j_1(aa_hc)) \mathrel {\Theta_R S_h} j_1(a, a_{h+1}, j_1(aa_{h+1}c))$,
for $h=0, \dots,  \allowbreak \ell-1$.
Concatenating, and setting
$\Lambda= \Theta_R  S_1 \circ \Theta_R   S_2 \circ \dots \circ \Theta_R  S_ \ell$
we get
$j_1(a, a, j_1(aac)) \mathrel {  \Lambda }  j_1(a, c, j_1(acc)) 
 =  j_1(a, c, j_2(aac))  $, by \eqref{4}.

By similar (and easier) arguments,
we have 
 $j_2(a,a,c) \mathrel {  \Lambda }j_2(a,c,c) =j_3(a,a,c)   $,
hence
$ j_1(a, c, j_2(aac))  \mathrel {  \Lambda }  j_1(a, c, j_2(acc)) 
= j_1(a, c, j_3(aac))  $.
Iterating,
 $ j_1(a, c, j_3(aac))  \mathrel {  \Lambda }  
 j_1(a, c, j_4(aac))  $ \dots \ 
Concatenating again, we get 
\begin{multline} \labbel{b}  
j_1(a, a, j_1(aac)) \mathrel {  \Lambda ^ {k-1} }  j_1(a, c, j_{k-1}(acc)) 
 = 
\\
j_1(a, c, j_k(aac))  =  
j_1(a, c, c)  = j_2(a, a, c)  \mathrel {  \Lambda ^ {k-2} }  
j_{k-1}(a,c,c) =c 
 \end{multline}  
by \eqref{5}. 
Putting together
\eqref{a} and \eqref{b}, we get 
$(a,c) \in R( \overline{V \cup W}) \circ \Lambda  ^{2k-3} $,
thus equation \eqref{turt} is proved.

  The proof of equation \eqref{turtt} 
is much simpler.
We have 
$a=p(a,b,b) \allowbreak \mathrel { \overline{V  ^\smallsmile \cup W}} p(a,a,c)$,
$a=p(a,a,a) \mathrel { R} p(a,a,c) $ and 
 $a=p(a,c,c) \mathrel {R ^\smallsmile }  p(a,a,c)
= j_1(a,a,c) $.
Moreover,
 as above,
 $j_1(a,a,c)  \mathrel  \Lambda ^{k-1}  j_{k-1}(a,c,c) \allowbreak =  c$,
hence  \eqref{turtt} follows.
 \end{proof}

Notice that if $k=1$ in the definition of directed Gumm terms, 
then $p$ is a Maltsev term for congruence permutability.
Since in a congruence permutable variety every
reflexive and admissible relations is a congruence,
all the considerations below will become 
trivial in case $k=1$, so we can always suppose 
$k \geq 2$. Notice that the above arguments 
show that 
$\alpha \circ \beta = \overline{ \alpha \cup \beta } $
holds in a congruence permutable variety. 

\begin{corollary} \labbel{c}
If a  variety $\mathcal V$ 
has $k+1$ directed Gumm terms $p, j_1, \dots, \allowbreak  j_k$, with 
$k \geq 2$, then, for every 
natural number $h \geq 1$,   
 $\mathcal V$ satisfies the identities
\begin{align} \labbel{a1}   
\Theta (S \circ _{2^h}  S)
& \subseteq 
( \Theta  S) ^{q+1} 
\\
\labbel{a2}
R(S \circ _{2^h}  T )
& \subseteq 
R (\overline{S \cup T}) \circ 
 (\Theta_R  S \circ _{q}   \Theta_R  T )
\\
\labbel{a3}
\Theta (S \circ _{2^h}  S ^\smallsmile )
& \subseteq 
 \Theta  S ^\smallsmile  \circ _{r}   \Theta  S 
\end{align}      
where $q=(2 ^{h+1}-2 )(2k-3)$,
$r=1+ (2 ^{h+1}-2 )(k-1)$,
$R$,  $S$, $T$ vary among 
reflexive and admissible relations and     
$\Theta$ varies among tolerances
(or  congruences).
 \end{corollary}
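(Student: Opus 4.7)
The plan is to prove the three identities \eqref{a1}, \eqref{a2}, \eqref{a3} by induction on $h \geq 1$, invoking Theorem \ref{t} once per inductive step. Two routine observations are used throughout: any composition of reflexive and admissible relations is itself reflexive and admissible, so $\overline{V} = V$ whenever $V$ is such a composition; and when $R$ is already a tolerance $\Theta$, we have $\Theta_R = \Theta$ and $\Theta\,\Theta^\smallsmile = \Theta$.

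The base case $h = 1$ follows from a direct application of Theorem \ref{t} with $\ell = 2$. For \eqref{a1}, apply \eqref{turt} with $R = \Theta$ and $V = W = S_1 = S_2 = S$; the right-hand side simplifies to $\Theta S \circ (\Theta S)^{2(2k-3)} = (\Theta S)^{q+1}$. For \eqref{a2}, apply \eqref{turt} with $V = S_1 = S$ and $W = S_2 = T$; the right-hand side becomes $R(\overline{S \cup T}) \circ (\Theta_R S \circ \Theta_R T)^{2k-3} = R(\overline{S \cup T}) \circ (\Theta_R S \circ_q \Theta_R T)$. For \eqref{a3}, apply \eqref{turtt} with $R = \Theta$, $V = S_1 = S$ and $W = S_2 = S^\smallsmile$; absorbing $\Theta\,\Theta^\smallsmile = \Theta$ and $\overline{S^\smallsmile} = S^\smallsmile$ yields $\Theta S^\smallsmile \circ (\Theta S \circ \Theta S^\smallsmile)^{k-1} = \Theta S^\smallsmile \circ_{2k-1} \Theta S$.

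The inductive step is parallel for all three identities; I illustrate it for \eqref{a1}. Given $(a,c) \in \Theta(S \circ_{2^h} S)$, the midpoint of the $2^h$-chain gives $b$ with $a (S \circ_{2^{h-1}} S)\, b\, (S \circ_{2^{h-1}} S)\, c$, so $(a,c) \in \Theta (V \circ W)(S_1 \circ \cdots \circ S_{2^h})$ with $V = W = S \circ_{2^{h-1}} S$ and each $S_i = S$. Applying \eqref{turt} and using $\overline{V \cup W} = V$ yields $\Theta(S \circ_{2^h} S) \subseteq \Theta V \circ (\Theta S)^{2^h(2k-3)}$. Invoking the inductive hypothesis $\Theta V \subseteq (\Theta S)^{q'+1}$ with $q' = (2^h - 2)(2k-3)$, and using $q' + 1 + 2^h(2k-3) = q + 1$, gives \eqref{a1}. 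For \eqref{a2} one sets $V = W = S \circ_{2^{h-1}} T$ and invokes the inductive hypothesis on the left factor $RV$ to peel off $R(\overline{S \cup T})$. For \eqref{a3} one splits $S \circ_{2^h} S^\smallsmile$ analogously and applies \eqref{turtt} in place of \eqref{turt}.

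The main delicate point is parity and alternation bookkeeping: one must verify that the tail supplied by \eqref{turt} or \eqref{turtt} splices onto the tail produced by the inductive hypothesis without breaking the alternating pattern. For \eqref{a1} this is trivial since the tail is a pure power of $\Theta S$. For \eqref{a2}, both tails have even length, starting with $\Theta_R S$ and ending with $\Theta_R T$, so they splice through $\Theta_R T \circ \Theta_R S$. For \eqref{a3}, the inductive tail has odd length $r'$ (starting and ending with $\Theta S^\smallsmile$) and the fresh tail has even length $2^h(k-1)$ (starting with $\Theta S$, ending with $\Theta S^\smallsmile$), so they splice through $\Theta S^\smallsmile \circ \Theta S$ into an odd-length chain of total length $r = r' + 2^h(k-1) = 1 + (2^{h+1}-2)(k-1)$ of the required form $\Theta S^\smallsmile \circ_r \Theta S$.
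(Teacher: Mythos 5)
Your proposal is correct and follows essentially the same route as the paper: split the $2^h$-chain in half, feed the halves in as $V,W$ and the individual factors as the $S_i$ in Theorem \ref{t}, and splice the resulting alternating tails onto the inductive hypothesis with the same parity bookkeeping (the paper merely phrases the induction as $h \to h+1$ and obtains \eqref{a1} as the special case $S=T$, $R=\Theta$ of \eqref{a2} instead of running the induction again). No gaps; your base-case computations and the exponent arithmetic $q'+2^{\,\cdot\,}(2k-3)=q$, $r'+2^{\,\cdot\,}(k-1)=r$ match the paper's.
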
 

\begin{proof}
The identity \eqref{a1}  
is the particular case of \eqref{a2}
when $S=T$ and $R= \Theta $, hence we shall go directly to the 
proof of \eqref{a2}.

The case $h=1$ of  \eqref{a2} follows from  
equation \eqref{turt} in Theorem \ref{t},
taking
$\ell = 2$,  $V= S_1= S$ and 
$W = S_2 = T $.
Suppose now that  \eqref{a2}  holds for some $h \geq 1$.
Since  $2^h$ is even,
we have $S \circ _{2^{h+1}}  T =
(S \circ _{2^h}  T) \circ (S \circ _{2^h}  T)$.
Taking
$\ell = 2 ^{h+1} $,  $V= W = S \circ _{2^h}  T$,
$S_1= S_3 = \dots =  S $ and
$S_2= S_4 = \dots =  T $ in equation \eqref{turt},
we get
$R (S \circ _{2^{h+1}}  T )
\subseteq 
R (S \circ _{2^h}  T) \circ 
 (\Theta_R  S \circ _{2^{h+1}(2k-3)}   \Theta_R  T )$,
since $\ell$ is even.
By the inductive assumption,
$R (S \circ _{2^h}  T )
 \subseteq 
R (\overline{S \cup T}) \circ 
 (\Theta_R  S \circ _{q}   \Theta_R  T )
$, hence, noticing that $q$ is even, we get 
 $R (S \circ _{2^{h+1}}  T )
 \subseteq 
R (\overline{S \cup T}) \circ 
 (\Theta_R  S \circ _{q'}   \Theta_R  T )
$, where $q'= q+ 2^{h+1}(2k-3)$.
But  $q'= (2^{h+1}-2)(2k-3)+ 2^{h+1}(2k-3)
= (2^{h+2}-2)(2k-3)$,
what we had to show.

As for the last identity,
in case $h=1$, take $\ell=2$, 
$R= \Theta $, $V= S_1= S$,
$W = S_2 = S ^\smallsmile $
in identity \eqref{turtt} in Theorem \ref{t}, getting
$\Theta (S \circ   S ^\smallsmile )
\subseteq  \Theta  S ^\smallsmile  \circ _{2k-1}   \Theta  S   $.
If the identity \eqref{a3}  holds for some $h \geq 1$, then, 
since  
$S \circ _{2^{h+1}}  S ^\smallsmile  =
(S \circ _{2^h}  S ^\smallsmile ) \circ (S \circ _{2^h}  S ^\smallsmile )$
(here we are using the fact that $2^h$ is even, for $h \geq 1$),
we can apply equation \eqref{turtt} in Theorem \ref{t}
with $\ell= 2 ^{h+1} $, 
$R= \Theta $, $V= W = S \circ _{2^h}  S ^\smallsmile$,
$ S_1= S_3 = \dots =  S $
and 
$ S_2= S_4 = \dots =  S  ^\smallsmile $
getting
$\Theta (S \circ _{2^{h+1}}  S ^\smallsmile )
\subseteq  \Theta (S \circ _{2^{h}}  S ^\smallsmile)
\circ ( \Theta  S  \circ _{2^{h+1}(k-1)}   \Theta  S  ^\smallsmile )  $,
since
$(S \circ _{2^{h}}  S ^\smallsmile) ^\smallsmile 
=  S ^{\smallsmile \smallsmile} \circ _{2^{h}}  S ^\smallsmile 
 = S \circ _{2^{h}}  S ^\smallsmile $,
using the fact that both $2^{h+1}(k-1)$
and  $2^h$ are even.
By the inductive hypothesis,
$ \Theta (S \circ _{2^{h}}  S ^\smallsmile)
\subseteq 
 \Theta  S ^\smallsmile  \circ _r   \Theta  S $,
hence we get 
$ \Theta (S \circ _{2^{h+1}}  S ^\smallsmile)
\subseteq 
 \Theta  S ^\smallsmile  \circ _{r'}   \Theta  S $,
 for $r'= r+ 2^{h+1}(k-1)$ 
noticing that $r$ is odd. 
But 
$r'= r+ 2^{h+1}(k-1)=
1+ (2 ^{h+1}-2 )(k-1) + 2^{h+1}(k-1) =
1+ (2 ^{h+2}-2 )(k-1)  $, what we had to show. 
\end{proof}

\section{Further equivalences and remarks} \labbel{equiv} 

In order to provide a uniform notation for the results in the following corollary,
let $\circ _ {\infty}$ 
be another notation for $+$.
This is justified since
$R \circ _ {\infty} S = R + S  = \bigcup _{n \in \mathbb N} R \circ_n S $.  
Recall that $\Theta_R$ denotes the smallest tolerance containing
the relation $R$. 

\begin{corollary} \labbel{tut} 
For a variety $\mathcal V$ 
and every  $m \geq 2$, possibly
$m= \infty$, each  of the following  identities 
is equivalent to congruence modularity
  \begin{enumerate}    
\item[(A1)]
$\Theta (S \circ_m S) \subseteq  (\Theta S)^*$ equivalently,
$(\Theta (S \circ_m S)) ^*= (\Theta S)^*$ 
\item[(A2)]
$\Theta (S \circ_m S) \subseteq \Theta S+ \Theta S ^\smallsmile $ 
\item[(A3)]
$\Theta (S \circ_m S) \subseteq 
(\Theta  (S ^\smallsmile \circ  S ))^*  $ 
\item[(B1)]
$\Theta (S \circ_m S ^\smallsmile ) \subseteq
\Theta  S+ \Theta  S ^\smallsmile $ equiv.
 $(\Theta (S \circ_m S ^\smallsmile ))^* =
\Theta  S+ \Theta  S ^\smallsmile $
\item[(B2)]
$\Theta (S \circ_m S ^\smallsmile ) \subseteq
 (\Theta ( S ^\smallsmile \circ S))^* $ 
equiv.\,$(\Theta (S \circ_m S ^\smallsmile ))^* =
(\Theta (S ^\smallsmile  \circ  S ))^*$
\item[(C1)]
$R (S \circ_m T) \subseteq 
R ( \overline{S \cup T}) \circ  (\Theta_R   S+ \Theta _R T)$ 
\item[(C2)]
$\Theta (S \circ_m T) \subseteq 
(\Theta ( \overline{S \cup T}))^* $ equiv.
$(\Theta (S \circ_m T))^* =
(\Theta ( \overline{S \cup T}))^* $
\item[(C3)]
$R (S \circ_m T) \subseteq 
R (T \circ \overline{S \cup T}) \circ  (\Theta_R   S+ \Theta_R  T)$ 
\item[(C4)]
$\Theta (S \circ_m T) \subseteq
 (\Theta ( T  \circ S))^* $
equivalently, 
$(\Theta (S \circ_m T))^* = 
(\Theta (T \circ S))^*$ 
\item[(D1)]
$R (S \circ_m T) \subseteq 
R ( \overline{S ^\smallsmile  \cup T  }) \circ  (\Theta_R   S+ \Theta_R  T)$
\item[(D2)]
$R (S \circ_m T) \subseteq 
R ( \overline{S {\cup} T}) 
( \overline{S ^\smallsmile  {\cup} T  }) 
( \overline{S {\cup} T ^\smallsmile })
( \overline{S ^\smallsmile  {\cup} T ^\smallsmile})
 \circ  (\Theta_R   S{+} \Theta_R  T)$
\item[(D3)]
$R (S \circ_m T) \subseteq 
R (\overline{ S {\cup}  S ^\smallsmile {\cup} T {\cup} T ^\smallsmile })
 \circ  
(\Theta_R   S{+} \Theta_R  T {+} \Theta_R  S ^\smallsmile  {+} \Theta_R  T ^\smallsmile )$
\item[(D4)]
$\Theta (S \circ_m T) \subseteq 
\Theta (T \circ S) + \Theta (T \circ T ^\smallsmile ) +
 \Theta (S ^\smallsmile \circ S) + \Theta (S ^\smallsmile \circ T )
+ \Theta (S ^\smallsmile \circ T ^\smallsmile ) +
\Theta (T ^\smallsmile  \circ S) + \Theta (T ^\smallsmile \circ T  )$
\item[(D5)]
$\Theta (S \circ_m T) \subseteq 
\Theta ((T+ T ^\smallsmile  ) \circ S) +
 \Theta (S ^\smallsmile \circ S) + \Theta (S ^\smallsmile \circ (T+ T ^\smallsmile  ) )$
  \end{enumerate}
where $S$, $T$ vary among reflexive and admissible relations,
$\Theta$ can be equivalently taken to vary either 
among congruences or  among tolerances
and $R$  can be equivalently taken to vary either 
among congruences or  reflexive and admissible relations.
\end{corollary}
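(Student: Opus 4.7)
I would split the argument into two directions. For the forward direction, assume $\mathcal V$ is congruence modular. By the Kazda--Kozik--McKenzie--Moore theorem $\mathcal V$ has directed Gumm terms $p, j_1, \dots, j_k$ for some $k$, and one may take $k \geq 2$, since the case $k = 1$ reduces to congruence permutability, in which every reflexive admissible relation is a congruence and all the identities collapse trivially. Then Corollary~\ref{c} furnishes directly the identities (A1), (B1), (C1) for $m$ of the form $2^h$, via \eqref{a1}, \eqref{a3}, \eqref{a2} respectively; identity (D1) is obtained by the same iteration scheme applied to \eqref{turtt} in place of \eqref{turt}. The extension to arbitrary finite $m \geq 2$ is immediate from the monotonicity $S \circ_m T \subseteq S \circ_{2^h} T$ whenever $m \leq 2^h$, and the case $m = \infty$ follows by taking the union over $h$, since each right-hand side on the list is closed under this union. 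The remaining identities (A2), (A3), (B2), (C2)--(C4), (D2)--(D5) are then weakenings or symmetric variants of (A1), (B1), (C1), (D1), obtained by systematic replacements $S \mapsto S^\smallsmile$ or $T \mapsto T^\smallsmile$ and by joining the resulting inclusions; each ``equivalently'' reformulation follows at once from the fact that the transitive closure of the left-hand side is sandwiched between the left- and right-hand sides.

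For the reverse direction, my plan is a uniform free-algebra extraction. Fix any identity $\iota$ on the list, work in $\mathbf F = \mathbf{F}_{\mathcal V}(x,y,z)$, and take $\Theta = \cg(x,z)$ together with $S$ (and $T$, $R$, if they occur in $\iota$) to be the smallest reflexive admissible relation on $\mathbf F$ containing an appropriate subset of $\{(x,y),(y,z)\}$, so that the pair $(x,z)$ belongs to the left-hand side of $\iota$. Applying $\iota$ then produces a finite chain $x = w_0, w_1, \dots, w_n = z$ in $\mathbf F$, each $w_i$ a ternary term in $x,y,z$, with each step in a reflexive admissible sub-relation whose generators are fully explicit. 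Unpacking the witnesses of each consecutive step produces a system of term identities; rearranging these (essentially inverting the computations in the proof of Theorem~\ref{t}) yields directed Gumm terms, whence modularity by Kazda--Kozik--McKenzie--Moore. In the simplest case, identity~\eqref{11a}, this extraction is essentially classical; for the other identities one may either route through \eqref{11a} after specialization (e.g.~(C1) and (D1) with $T = S$ and $R = \Theta$, or (A2), (A3) by monotonicity) or repeat the extraction directly.

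The main obstacle I foresee is precisely this extraction step for the weaker identities in the D-series, whose right-hand sides are long ``zigzag'' joins: one must check that even so the chain produced in the free algebra retains enough structure to reconstruct the full collection (DG1)--(DG5). A secondary, essentially cosmetic, point is verifying that $\Theta$ (resp.~$R$) may equivalently be taken to range over congruences, tolerances or reflexive admissible relations without affecting what is characterised; this is handled by the device described in the introduction, whereby a congruence variable is obtainable from a reflexive admissible variable $R$ as $(R \circ R^\smallsmile)^*$, and a tolerance variable as $\overline{R \cup R^\smallsmile}$.
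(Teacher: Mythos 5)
Your forward direction is essentially the paper's (directed Gumm terms via Kazda--Kozik--McKenzie--Moore, then Corollary \ref{c}), but your reverse direction has a genuine gap. You propose a free-algebra extraction: realize $(x,z)$ in the left-hand side with $\Theta=\cg(x,z)$ and suitably generated relations $S,T$, unpack the resulting chain, and ``invert the computations of Theorem \ref{t}'' to reconstruct directed Gumm terms. You yourself flag that this step is unresolved for the D-series identities, and indeed nothing in the proposal shows that the chains obtained from right-hand sides such as those of (D2)--(D5) yield terms satisfying (DG1)--(DG5); recovering \emph{directed} Gumm terms is exactly the hard content of the KKMM theorem, and there is no argument that the extraction goes through. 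The paper avoids all of this: it simply specializes each identity to congruences $\alpha,\beta,\gamma$ by taking $R=\Theta=\alpha$, $S=\beta\circ\alpha\gamma$, $T=\beta$, obtaining an inclusion of the form $\alpha(\beta\circ_n\alpha\gamma)\subseteq\alpha\beta+\alpha\gamma$ for some $n\geq 3$ (with a small extra computation for (D3)), which through a variety implies congruence modularity by Day's theorem. This also settles, with no extra work, the requirement that the identities already imply modularity when $\Theta$ and $R$ range only over congruences, which is the weakest and hence the relevant form for this direction.

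There is also a flaw in how you dispose of (D2) in the forward direction. You claim the remaining identities are ``weakenings or symmetric variants'' of (A1), (B1), (C1), (D1) obtained by substituting $S\mapsto S^\smallsmile$ or $T\mapsto T^\smallsmile$ and joining the resulting inclusions. For (D2) this cannot work as stated: substituting $S^\smallsmile$ for $S$ in (C1) or (D1) changes the left-hand side to $R(S^\smallsmile\circ_m T)$, which does not contain $R(S\circ_m T)$, so you cannot intersect the corresponding right-hand sides. The paper's proof of (D2) needs a genuine extra step: after applying (C1), it rewrites $R(\overline{S\cup T})=R(\overline{S\cup T})(S\circ T)$ and applies equation \eqref{turtt} with $R(\overline{S\cup T})$ in place of $R$, then repeats, accumulating the four intersected closure factors. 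Similarly, (D1) is not obtained by ``the same iteration scheme applied to \eqref{turtt}'' (the asymmetry of $V^\smallsmile$ in \eqref{turtt} obstructs the induction for general $S,T$); the paper instead combines (C1), the inclusion $\overline{S\cup T}\subseteq S\circ T$, and a single application of \eqref{turtt} with $\ell=2$. The rest of your forward direction (monotonicity for (A2), (A3), (B2), (C2)--(C4), (D3)--(D5), passage from $2^h$ to arbitrary finite $m$ and to $m=\infty$, and the congruence/tolerance/relation variants) matches the paper and is fine.
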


 \begin{proof}
If one of the above conditions holds when $\Theta$
varies among  tolerances, then it obviously 
holds when $\Theta$ varies among congruences.
A similar observation applies to  $R$.
Moreover, in each line with two conditions, both conditions are obviously 
equivalent, since $^*$ is a monotone and idempotent operator.
In (B2), if $m=2$, in order to get the right-hand identity,
use the left-hand identity twice,
both as it stands  and with $S ^\smallsmile $  in 
place $S$. A similar remark applies to (C4).

By considering congruences 
$\alpha$, $\beta$ and $\gamma$,
 taking $R=\Theta= \alpha $, 
 $S= \beta \circ \alpha \gamma $ 
and $T = \beta $ 
in any one of the above identities,
we get an identity of the form
$ \alpha ( \beta \circ _n \alpha \gamma )
\subseteq
\alpha \beta + \alpha \gamma $,
for some $n \geq 3$ (here it is fundamental to assume that $m \geq 2$).
The most involved case is  (D3): notice that
both $\alpha \gamma \circ \beta \subseteq
 \alpha \gamma \circ \beta \circ  \alpha \gamma $
and  $ \beta \circ \alpha \gamma  \subseteq 
\alpha \gamma \circ \beta \circ \alpha \gamma $,
hence
$ \overline{S \cup S ^\smallsmile  \cup T \cup T ^\smallsmile  }
 \subseteq \alpha \gamma \circ \beta \circ  \alpha \gamma$,
hence
 $ \alpha ( \overline{S \cup S ^\smallsmile  \cup T \cup T ^\smallsmile  })
 \subseteq \alpha (\alpha \gamma \circ \beta \circ  \alpha \gamma)
=\alpha \gamma \circ \alpha \beta \circ  \alpha \gamma$.
Since, for $n \geq 3$, obviously
$\alpha ( \beta \circ \alpha \gamma \circ \beta )
\subseteq  \alpha ( \beta \circ _n \alpha \gamma )$,
then from 
$ \alpha ( \beta \circ _n \alpha \gamma )
\subseteq
\alpha \beta + \alpha \gamma $ we get
$\alpha ( \beta \circ \alpha \gamma \circ \beta )
\subseteq 
\alpha \beta + \alpha \gamma $.
Through a variety, this condition implies
congruence modularity 
by Day \cite{D}.

Hence it remains to show that congruence modularity 
implies each of the identities in the corollary,
in the stronger form in which
$\Theta$ varies among tolerances and $R$
 varies among reflexive and admissible relations.
By the mentioned results
from \cite{G1,kkmm},
we can assume that $\mathcal V$ has
directed Gumm terms, for some $k$.  
Then, for every finite $m$,  Condition (C1) follows from 
equation \eqref{a2} in Corollary \ref{c}.
Of course, if (C1) holds for every finite $m$,
then it holds also for $m= \infty$.  
All the conditions except (B1), (D1) and (D2)
are consequences of (C1), by the  obvious monotonicity
properties 
of the operators present in the identities. 
(B1) is a consequence of equation \eqref{a3} in
Corollary \ref{c}. 

In order to prove (D1), first notice that,
by (C1), we have 
$R(S \circ_m T) \subseteq R (S \circ  T  ) \circ  (\Theta_R   S+ \Theta_R  T)$.
By taking $\ell=2$, 
 $V=S_1=S$  
and $W=S_2=T$ in 
equation \eqref{turtt} in Theorem \ref{t},
we get  $ R (S \circ  T  ) \subseteq 
\Theta ( \overline{S ^\smallsmile  \cup T  }) \circ  (\Theta   S+ \Theta  T)$. 
Putting together the above identities we get (D1).

The proof of the stronger (D2) is slightly more involved. 
By (C1), we have 
$R (S \circ_m T) \subseteq 
R ( \overline{S \cup T } ) \circ  (\Theta_R   S+ \Theta_R  T)
=
R ( \overline{S \cup T } ) (S \circ T)\circ  (\Theta_R   S+ \Theta_R  T)$.
We now can take $\ell=2$, 
$R ( \overline{S \cup T } ) $ in place of $R$, $V=S_1=S$  
and $W=S_2=T$ in 
equation \eqref{turtt} in Theorem \ref{t},
getting 
$R ( \overline{S \cup T } ) =
R ( \overline{S \cup T } ) (S \circ T)
\subseteq 
R ( \overline{S \cup T } ) 
(\overline{S ^\smallsmile \cup T ^\smallsmile  })
( \overline{S ^\smallsmile  \cup T  }) \circ  (\Theta_R   S+ \Theta_R  T) $,
since  
$( \overline{S \cup T } ) ^\smallsmile =  
\overline{S ^\smallsmile \cup T ^\smallsmile  }$.
Moreover, since 
$S \cup T  = T \cup S$, we can repeat the argument
once again, getting  (D2).
\end{proof}

\begin{remark} \labbel{nondir}
We have made an essential use of the results by 
 Kazda,  Kozik,  McKenzie and Moore \cite{kkmm}  
in order to prove Theorem \ref{t}, hence to prove
equations \eqref{a1}, \eqref{a2} in Corollary \ref{c}
and Condition (C1) in Corollary \ref{tut}.
However, the reader who knows (undirected)
Gumm terms might easily see that the above arguments
can be adapted to get proofs for equation \eqref{a3} in 
\ref{c} and for conditions (A2)-(B2) and (D3)-(D5)
in Corollary \ref{tut} using just Gumm terms.
 This might be convenient
when we want to evaluate the number of actual  factors on the 
right-hand sides, since there might be varieties with a smaller number 
of Gumm terms rather than directed Gumm terms.

In a few cases, it is even 
enough to use just Day terms \cite{D}.
In fact, there is a relation identity which
 characterizes exactly the number
of Day terms of a congruence modular variety. 
See the next proposition.
 \end{remark}   

Recall that \emph{Day terms} are quaternary terms 
$d_0, d_1, \dots, d_k$ satisfying
the following conditions.
\begin{align*}     
 x &=d_i(x,y,y,x) && \text{for every } i;  
\\ 
 x&=d_0(x,y,z,w); 
\\
 d_i(x,x,w,w)&=d_{i+1}(x,x,w,w), && \text{for $i$ even}; 
\\
 d_i(x,y,y,w)&=d_{i+1}(x,y,y,w), && \text{for $i$ odd, and} 
\\ 
 d_{k}(x,y,z,w)&=w.
\end{align*}

\begin{proposition} \labbel{day}
A variety $\mathcal V$ has $k+1$ Day terms
$d_0, d_1, \dots, d_k$ if and only if   
$\mathcal V$ satisfies the identity
\begin{equation*}\labbel{dr}  
\Theta (S \circ S ^\smallsmile ) \subseteq \Theta S \circ_{k-1} \Theta S ^\smallsmile
\end{equation*}    
where  $S$ varies
among reflexive and admissible relations and 
$\Theta$ can be equivalently taken to vary among tolerances 
or among congruences.
 \end{proposition}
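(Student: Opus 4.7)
The plan is to prove both directions of the equivalence.

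For the forward direction, I consider $(a,c) \in \Theta(S \circ S^\smallsmile)$ with witness $b$ satisfying $a \mathrel S b$ and $c \mathrel S b$, and form the two sequences $f_i = d_i(a,a,c,c)$ and $e_i = d_i(a,b,b,c)$ for $0 \le i \le k$. The Day identities give $f_0 = e_0 = a$, $f_k = e_k = c$, $f_i = f_{i+1}$ for $i$ even, and $e_i = e_{i+1}$ for $i$ odd; admissibility of $S$ applied to $(a,a), (a,b), (c,b), (c,c) \in S$ yields $f_i \mathrel S e_i$ at every index. These pieces assemble into the chain
\[ a = f_0 = f_1 \mathrel S e_1 = e_2 \mathrel{S^\smallsmile} f_2 = f_3 \mathrel S e_3 = e_4 \mathrel{S^\smallsmile} \cdots , \]
ending at $c$; its very last step is trivial (both endpoints equal $c$ via $d_k = w$ combined with the appropriate parity identity), leaving exactly $k-1$ alternating nontrivial $S$- and $S^\smallsmile$-factors. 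For the $\Theta$-decoration I would use the idempotence $d_i(x,y,y,x)=x$ and admissibility of $\Theta$ along the first or fourth coordinate, with only the hypothesis $a \mathrel \Theta c$: this gives $e_i \mathrel \Theta d_i(a,b,b,a) = a$ and $f_i \mathrel \Theta d_i(a,a,a,a) = a$, and symmetrically $e_i, f_i \mathrel \Theta c$. When $\Theta$ is a congruence, transitivity upgrades ``$f_i, e_i \mathrel \Theta a$'' to $f_i \mathrel \Theta e_i$, so each chain factor lies in $\Theta S$ or $\Theta S^\smallsmile$ as required.

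The main obstacle is the tolerance case of the forward direction, where transitivity of $\Theta$ is no longer available to chain $f_i \mathrel \Theta a \mathrel \Theta e_i$ into $f_i \mathrel \Theta e_i$. To handle it, the chain must be rerouted through auxiliary elements -- for instance, via nested applications of the $d_i$ -- so that adjacent pairs differ in only a single coordinate directly controlled by $a \mathrel \Theta c$, in the spirit of the $j^*$-trick from the proof of Theorem~\ref{t}.

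For the reverse direction, I would apply the hypothesized identity inside the free algebra $\mathbf F = F_{\mathcal V}(x, y, z, w)$. Taking $S$ to be a reflexive and admissible relation whose generators witness $x \mathrel{S \circ S^\smallsmile} w$, and $\Theta$ the smallest tolerance containing $(x, w)$, the hypothesis produces terms $v_1, \ldots, v_{k-2} \in F$ with $x \mathrel{\Theta S} v_1 \mathrel{\Theta S^\smallsmile} v_2 \mathrel{\Theta S} \cdots = w$. Reading each $v_i$ as a $4$-ary term $d_i(x, y, z, w)$, the $S$-witnesses at odd steps of the chain translate into the odd Day identity $d_i(x, y, y, w) = d_{i+1}(x, y, y, w)$, the $\Theta$-witnesses at even steps give the even identity $d_i(x, x, w, w) = d_{i+1}(x, x, w, w)$, the endpoints of the chain force $d_0 = x$ and $d_k = w$, and the idempotence $d_i(x, y, y, x) = x$ follows from a standard normalization of the chosen term representatives.
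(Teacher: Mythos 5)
Your forward direction follows the paper's own chain $d_i(a,a,c,c) \mathrel S d_i(a,b,b,c)$, and your congruence case is fine, but the tolerance case is exactly the point of the statement (``$\Theta$ can be \emph{equivalently} taken to vary among tolerances or congruences''), and there you only name the obstacle and gesture at ``rerouting through auxiliary elements''. The missing step is a concrete computation, which the paper takes from Cz\'edli and Horv\'ath: using the idempotence $d_j(x,y,y,x)=x$ one writes
$d_i(a,a,c,c) = d_i\bigl(d_j(a,b,b,a),\,a,\,c,\,d_j(c,b,b,c)\bigr) \mathrel{\Theta} d_i\bigl(d_j(a,b,b,c),\,c,\,c,\,d_j(a,b,b,c)\bigr) = d_j(a,b,b,c)$,
a \emph{single} application of admissibility of $\Theta$ moving only entries controlled by $a \mathrel\Theta c$; this gives $d_i(a,a,c,c) \mathrel\Theta d_j(a,b,b,c)$ for all $i,j$ with no transitivity. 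So the idea you hint at is the right kind, but as written the tolerance half of the forward direction is not proved.

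The reverse direction is where your route genuinely breaks. Your dictionary asks each $S$-step of the chain to yield the odd Day identity $d_i(x,y,y,w)=d_{i+1}(x,y,y,w)$; that would require $S$ to be contained in the kernel of the retraction $z\mapsto y$, i.e.\ $S\subseteq \mathrm{Cg}(y,z)$, and then $(x,w)\in S\circ S^\smallsmile\subseteq \mathrm{Cg}(y,z)$ would fail in the free algebra, so the hypothesis $x \mathrel{S\circ S^\smallsmile} w$ and the desired translation are incompatible. Similarly, $\Theta$ occurs in \emph{every} factor $\Theta S$, $\Theta S^\smallsmile$, not just at even steps, and a $\Theta$ containing $(x,w)$ cannot produce the even identity $d_i(x,x,w,w)=d_{i+1}(x,x,w,w)$ by a kernel argument. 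Finally the count does not match: the chain has only $k-2$ internal elements, while $k+1$ Day terms are needed. The paper avoids all of this by specializing to congruences: put $\Theta=\alpha$ and $S=\beta\circ\alpha\gamma$; then $\alpha S=\alpha\beta\circ\alpha\gamma$, each of the $k-1$ mixed factors splits into two congruence factors, $k-2$ adjacent factors absorb, and one gets $\alpha(\beta\circ\alpha\gamma\circ\beta)\subseteq\alpha\beta\circ_k\alpha\gamma$, which by Day's theorem is exactly equivalent (within a variety) to having $k+1$ Day terms. If you want a direct free-algebra argument you must in effect reprove Day's correspondence with $\beta=\mathrm{Cg}\{(x,y),(z,w)\}$ and $\gamma=\mathrm{Cg}(y,z)$, where the two kinds of Day identities come from the two congruence steps hidden inside each single $\Theta S$-step; a one-identity-per-chain-step translation of the relation identity cannot work.
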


  \begin{proof}
Suppose that $\mathcal V$ has Day terms 
 $d_0, d_1, \dots, d_k$ and in some algebra in $\mathcal V$ 
we have 
$(a,c) \in \Theta (S \circ S ^\smallsmile )$,
for $\Theta$ a tolerance.
Thus
$a \mathrel { \Theta } c $
and
there is some $b$ such that
$a \mathrel { S} b   \mathrel { S ^\smallsmile } c$,
hence
$c \mathrel { S } b$. 
Then, say for $k$ even,  
$a=d_1(a,a,c,c) \mathrel S d_1(a,b,b,c)=d_2(a,b,b,c)
\mathrel { S ^\smallsmile } d_2(a,a,c,c) = d_3(a,a,c,c)
\mathrel { S}  d_3(a,b,b,c) \dots  d_{k-1}(a,b,b,c)=c  $.
Moreover, by an argument in Cz\'edli and  Horv\'ath \cite{CH},
\begin{multline*}
d_i(a,a,c,c) = 
d_i(d_j(abb\text{\boldmath$a$}),\text{\boldmath$a$},c,d_j(\text{\boldmath$c$}bbc))
\mathrel { \Theta } 
\\
d_i(d_j(abb\text{\boldmath$c$}),\text{\boldmath$c$},c,d_j(\text{\boldmath$a$}bbc))=
d_j(a,b,b,c),
 \end{multline*}
for every $i,j \leq k$.     
The above relations show that 
 $(a,c) \in  \Theta S \circ_{k-1} \Theta S ^\smallsmile$.

Conversely, suppose that
$\alpha$, $\beta$ and $\gamma$ are congruences and
take
$\Theta = \alpha $ and
$S= \beta \circ \alpha \gamma $
in the identity in the statement of the proposition.
Then   
$\alpha(\beta \circ \alpha \gamma  \circ \beta  )=
\alpha  (S \circ S ^\smallsmile ) \subseteq 
\alpha S \circ_{k-1} \alpha S ^\smallsmile
=
(\alpha (\beta \circ \alpha \gamma) ) \circ_{k-1} ( \alpha (\alpha \gamma \circ \beta ))
=
(\alpha \beta \circ \alpha \gamma ) \circ_{k-1} ( \alpha \gamma \circ \alpha \beta )
=
\alpha \beta \circ_k \alpha \gamma 
$,
since, both $\alpha \beta $ and $\alpha \gamma $ being congruences,
$k-2$ factors are absorbed in the last identity,
hence we end up with exactly $k$ factors.    
It is a standard fact implicit in \cite{D}
that, within a variety, the congruence identity 
$\alpha( \beta \circ \alpha \gamma \circ \beta )
\subseteq \alpha \beta \circ _k \alpha \gamma $
corresponds exactly to the existence of 
$k+1$ Day terms.   
 \end{proof}

For an appropriate value of $r$, 
the identity \eqref{a3} in Corollary \ref{c} can be obtained as a 
consequence of Proposition \ref{day} and, 
according to the respective number of terms in 
some given variety,
we might get a better bound.
Conditions (A2)-(B2) in Corollary \ref{tut}, too,
can be obtained as a consequence of 
Proposition \ref{day}.

\acknowledgement{We thank the students of Tor Vergata
University for  stimulating discussions.}

\smallskip 

This is a preliminary version, still to
be expanded.  It might contain inaccuraccies (to be
precise, it is more likely to contain inaccuracies than  subsequent versions).

  We have not yet performed a completely accurate
search in order to check whether some of the results presented here
are already known. Credits for already known results should go to the
original discoverers.

\smallskip

{\scriptsize
Though the author has done his best efforts to compile the following
list of references in the most accurate way,
 he acknowledges that the list might 
turn out to be incomplete
or partially inaccurate, possibly for reasons not depending on him.
It is not intended that each work in the list
has given equally significant contributions to the discipline.
Henceforth the author disagrees with the use of the list
(even in aggregate forms in combination with similar lists)
in order to determine rankings or other indicators of, e.~g., journals, individuals or
institutions. In particular, the author 
 considers that it is highly  inappropriate, 
and strongly discourages, the use 
(even in partial, preliminary or auxiliary forms)
of indicators extracted from the list in decisions about individuals (especially, job opportunities, career progressions etc.), attributions of funds, and selections or evaluations of research projects.
\par
}

\end{document}